\documentclass[11pt, reqno]{amsart}
\usepackage[a4paper, total={6in, 8in}]{geometry}
\usepackage{xcolor}
\usepackage{amsmath, amsfonts, amssymb, amsbsy, bigstrut, graphicx, enumerate,  upref, longtable, comment, comment, csquotes, booktabs, array, verbatim}
\usepackage{bbm}
\usepackage{mathtools}
\usepackage{booktabs}
\usepackage{tabularx}
\usepackage{tikz}
\usepackage{float}
\usepackage{subcaption}
\usepackage{pstricks}
\usepackage{lscape}
\usepackage[breaklinks]{hyperref}
\usepackage[
  backend=biber,
  style=authoryear,
  giveninits=true,natbib
]{biblatex}
\allowdisplaybreaks
\usepackage[toc,page]{appendix}
\usepackage{booktabs}
\usepackage{siunitx}
\newcommand{\abs}[1]{| #1|}

\newcommand{\bone}{\mathbbm{1}}

\newtheorem{thm}{Theorem}[section]
\newtheorem{lmm}[thm]{Lemma}

\theoremstyle{definition}
\newtheorem{remark}[thm]{Remark}

\newcommand{\cov}{\mathrm{Cov}}

\newcommand{\ee}{\mathbb{E}}

\newcommand{\var}{\mathrm{Var}}

\begin{document}

\title{Kernel estimation of Chatterjee's  dependence coefficient}
\author{Mona Azadkia}
\author{Holger Dette}
\address{}
\email{}
\begin{abstract}
\citet{dette2013copula} introduced a copula-based measure of dependence, which  implies independence if it vanishes and is  equal to $1$ if one variable is a
measurable function of the other.  For continuous distributions, the dependence measure also appears as stochastic limit of Chatterjee's rank correlation \citep{Chatterjee21}.  They proved asymptotic normality  of a corresponding  kernel estimator with a  parametric rate of convergence. 
In recent work~\citet{shidrtonhan2021a} revealed empirically and theoretically that under independence the asymptotic variance degenerates. In this note,  we derive the correct asymptotic distribution of the kernel estimator under the null hypothesis of independence. We show that after a suitable centering and rescaling at a rate larger than $\sqrt{n}$ (where $n$ is the sample size), the estimator is asymptotically normal. The analysis relies on a refined central limit theorem for double-indexed linear permutation statistics and accounts for boundary effects that are asymptotically non-negligible. As a consequence, we obtain a valid basis for independence testing without relying on permutations and argue that tests based on the kernel estimator detect local alternatives converging to the null at a faster rate than those detectable by Chatterjee's rank correlation.
\end{abstract}

\maketitle

{\it Keywords and Phrases: Dependence measure; Independence testing; Copulae.}

\section{Introduction}
\label{sec1}
\renewcommand{\theequation}{\thesection.\arabic{equation}}

A fundamental problem in mathematical statistics is the quantification of the
dependence between two real-valued random variables $X$ and $Y$ based on a sample
of independent and identically distributed observations $(X_1,Y_1),\ldots,(X_n,Y_n)$.
Classical dependence measures such as Pearson's and Spearman's correlation,
Kendall's $\tau$ and Gini's $\gamma$ take values in $[0,1]$ or $[-1,1]$ and vanish
under independence; moreover, they are highly sensitive to linear and monotone
relationships. However, it is well known that these measures may fail to detect
non-monotone forms of dependence, even in situations where $Y=f(X)$ for some
deterministic function $f$. In addition, a vanishing classical dependence measure
typically only implies independence under restrictive structural assumptions, such
as joint normality.

Motivated by these limitations, numerous alternative dependence measures have been
proposed in the literature mainly for testing the hypothesis of
independence (see  \citet{blum1961,rosenblatt1975,CSORGO1985} for some early references and  
\citet{szekely2007,gretton2008,bergsma2014,zhang2019} for 
 more recent
approaches relying on distance- and kernel-based methods). As 
 pointed out by  \citet{Chatterjee21}, these measures are less suited  for measuring the strength of the dependence.

In recent years there has been considerable interest in dependence measures $\mu=\mu (X,Y)$ for two (real-valued) random variables $X$ and $Y$ satisfying 
the following
axioms:
\begin{enumerate}
\item[(1.1)] $0 \le \mu (X,Y) \le 1$,
\item[(1.2)] $\mu (X,Y)=0$ if and only if $X$ and $Y$ are independent,
\item[(1.3)] $\mu (X,Y)=1$ if and only if $Y=f(X)$ for some measurable function
$f:\mathbb{R}\rightarrow\mathbb{R}$.
\end{enumerate}
An early work in this direction is the paper by  \citet{dette2013copula}, who  defined,  for distributions with continuous marginals,   the  association measure
\begin{align}
    \label{hd1}
   r = r(X,Y)  = 6\int_{[0,1]^2}\big ( \partial_1C(u,v)\big)^2 du dv -2 
\end{align}
 which \citet{shidrtonhan2021a}  called the {\it Dette-Siburg-Stoimenov correlation coefficient}. Here $C$ denotes the copula of the vector $(X,Y)$ and $\partial_1C$ its partial derivative with respect to the first coordinate. In the same paper the authors proposed a kernel-type estimator (see Section \ref{sec2} for details) for  which  they  proved asymptotic normality with a $\sqrt{n}$ rate, where $n$ denotes the sample size. \citet{Chatterjee21} considered  the measure 
\begin{align}
    \label{hd2}
    \xi =  \xi (X,Y) = 
 \frac{\int \text{Var}( \mathbb{E} [ \bone(Y \ge y) \mid X]) d P_Y(t) }{ \int \text{Var}(\bone(Y \ge y)) d P_Y(y)},
\end{align}
 meanwhile known as {\it Chatterjee's  dependence coefficient}. It turns out that for continuous distributions the two measures \eqref{hd1} and \eqref{hd2} actually coincide. An estimator of \eqref{hd2} known as  {\it Chatterjee's rank correlation} converges weakly to a normal distribution  with a $\sqrt{n}$-rate as well (see \cite{Chatterjee21}, \citet{lin2025limittheoremschatterjeesrank} and \citet{kroll2025asymptoticnormalitychatterjeesrank}). \citet{shidrtonhan2021a} investigated the power properties of tests for independence based on both estimators by asymptotic theory and by means of a simulation study. In particular they pointed out a sign error in the asymptotic analysis 
of the kernel type estimator by  \citet{dette2013copula},   which shows that their  estimator scaled with $\sqrt{n}$ converges in fact to a constant if $X$ and $Y$ are independent.  These observations  indicate that under independence the estimator converges weakly to a non-degenerate limiting distribution with a rate larger  than $\sqrt{n}$. 
In this note we confirm this conjecture, explicitly determine this rate and prove that under appropriate scaling the kernel type estimator of \citet{dette2013copula}  converges weakly to a normal distribution.  Our sophisticated analysis  actually relies on an appropriate centering term appearing in a central limit theorem for double-indexed linear permutation statistics.  

From a statistical point of view our results indicate that for independence testing the kernel type estimator of \citet{dette2013copula}  is more efficient than Chatterjee's rank correlation with respect to local alternatives, which confirms the empirical finding  of \citet{shidrtonhan2021a}. More, specifically, while a test based on  Chatterjee's correlation  can detect local alternatives converging to the null at a rate $n^{-1/4}$, the test based on the kernel-type estimator of \citet{dette2013copula}  is sensitive with respect to local alternatives converging to  the null at a rate $(n/h_1)^{-1/4}$, where $h_1$ denotes a bandwidth converging to $0$ at a rate slower than $n^{-1/2}$. 

\section{A kernel type estimator and its asymptotic distribution}
\label{sec2}
\renewcommand{\theequation}{\thesection.\arabic{equation}}
   \setcounter{equation}{0}

For a sample   $(X_1,Y_1), \ldots ,(X_n,Y_n)$ of  independent identically distributed
random variables with distribution function $F$ and copula $C$, \citet{dette2013copula}
proposed to estimate the partial derivative 
$\tau (u,v) ={\partial}_1
C (u,v) = \frac {\partial}{\partial u} \
C (u,v)$ of the copula  by the statistic 
\begin{align} \label{d1} \hat \tau_n (u,v) = \frac {1}{nh_1} \sum^n_{i=1} K  \Bigl(
\frac {u-\hat F_{n1}(X_i)}{h_1} \Bigr ) \bar K \Bigl ( \frac {v- \hat F_{n2}(Y_i)}{h_2} \Bigr ).
\end{align}
Here  $K$ denotes a symmetric kernel supported on the interval   $[-1,1]$, with corresponding cumulative distribution function
$$ \bar K (x)= \int^x_{- \infty} K(t) \ dt ,  $$
$h_1, h_2$ are  bandwidths converging to $0$ with increasing sample
size 
and   $\hat F_{n1}$
and $\hat F_{n2}$ denote the empirical distribution functions of $X_1,\dots,X_n$ and
$Y_1,\dots,Y_n$, respectively.  It can be shown that (under appropriate assumptions)  $\hat \tau_n$ is a consistent estimator of $\partial_1 C$  and therefore \citet{dette2013copula} defined the estimator of the dependence measure \eqref{hd1} by 
\begin{align}
\label{d2a} \hat r_n = 6 \hat \tau^2_n -2 ,
\end{align}
where  
\begin{align}
     \label{d2} \hat
\tau^2_n= \int^1_0 \int^1_0 \hat \tau_n^2 (u,v) \ dudv, 
\end{align}
is the estimator 
of the integral 
$
    \tau^2 = \int^1_0 \int^1_0  ( \partial_1  \ C (u,v) ) ^2_2 \ dudv.
$ Note that under independence, that is $C(u,v) =uv$, we have $\tau^2= 1/3$, and therefore the coefficient $r$ in \eqref{hd1} vanishes.
In their main statement \citet{dette2013copula} proved  
that an appropriately standardized version of this estimator is asymptotically normal, that is  
\begin{align}
\sqrt{n} \ (\hat r_n - r )
\stackrel{\mathcal{D}}{\longrightarrow} \mathcal{N} (0,  144 \sigma^2), 
\label{hd3}
\end{align} 
with an explicit expression for $\sigma^2$. As pointed out  by \citet{shidrtonhan2021a}, this formula contains 
an error due to a wrong sign. Additionally there is a further minor error, which was not identified in the last-named reference, and we give the correct formula for $\sigma^2$ in Section \ref{eq:CorrectedVariance} of the appendix. As pointed out by \citet{shidrtonhan2021a}, an important consequence of the completely  corrected formula is that the asymptotic variance in the case of independence, where  $C(u,v) = uv$  vanishes.  In other words, in the case of independence the limiting (normal) distribution  in \eqref{hd3} is degenerate.  This raises the question, if it possible to derive a weak convergence result in this case with a different scaling and a non-degenerate limiting distribution. In the following result we give an affirmative answer to this question. 

\begin{thm}\label{thm:taunCLT}
    For continuous and independent $X$ and $Y$, symmetric kernel $K$ supported on the interval $[-1, 1]$ and bandwidths $h_1$ and $h_2$ such that
    \[
    nh_1^2\rightarrow\infty,\quad nh_1^4\rightarrow 0, \quad nh_2\rightarrow \infty,
    \]
    we have 
    \begin{align*}
        \sqrt{\frac{n}{h_1}} \big (\hat{r}_n - b_n \big ) \stackrel{\mathcal{D}}{\longrightarrow} \mathcal{N} (0, 144\sigma_0^2),
    \end{align*}
    where 
    \begin{align*}
        b_n :=& \frac{6}{n^2h_1^2} \sum_{i\neq j}\int_{0}^{1}K\Big(\frac{u - i/n}{h_1}\Big)K\Big(\frac{u - j/n}{h_1}\Big)du\\
    & ~~~~~~~~~~~\times \frac{1}{n(n - 1)} \sum_{i\neq j}\int_{0}^{1}\bar{K}\Big(\frac{v - i/n}{h_2}\Big)\bar{K}\Big(\frac{v - j/n}{h_2}\Big)dv  \\
        &  ~~~~~~~~~~~+ \frac{6}{n^2h_1^2}\Big(\sum_{i = 1}^n \int_{0}^1 K^2\Big(\frac{u - i/n}{h_1}\Big)du\Big)\Big(\frac{1}{n}\sum_{i = 1}^n \int_{0}^1\bar{K}^2\Big(\frac{v - i/n}{h_2}\Big)dv\Big) - 2,
    \end{align*}
   and
    \begin{align}\label{eq:sigma0}
        \sigma_0^2 := \frac{2}{45} \int_0^1\Big (1-\int_{-t}^1 K(v)\bar{K}(v + t)dv\Big )^2 dt.
    \end{align}

\end{thm}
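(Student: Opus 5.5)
Under independence and continuity the ranks $(\hat F_{n1}(X_i), \hat F_{n2}(Y_i))_{i}$ have the same law as $(i/n, \pi(i)/n)_{i}$, where $\pi$ is a uniformly random permutation of $\{1,\dots,n\}$. Hence $\hat\tau^2_n$ equals in distribution
\begin{align*}
T_n=\frac{1}{n^2h_1^2}\sum_{i,j} a_{ij}\, b_{\pi(i)\pi(j)},\qquad a_{ij}=\int_0^1 K\Big(\frac{u-i/n}{h_1}\Big)K\Big(\frac{u-j/n}{h_1}\Big)du,\quad b_{kl}=\int_0^1 \bar K\Big(\frac{v-k/n}{h_2}\Big)\bar K\Big(\frac{v-l/n}{h_2}\Big)dv .
\end{align*}
This is a double-indexed linear permutation statistic. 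We split it into diagonal and off-diagonal parts. The diagonal part $\frac{1}{n^2h_1^2}\sum_i a_{ii}b_{\pi(i)\pi(i)}$ is a simple linear rank statistic. Its mean is $\frac{1}{n^2h_1^2}(\sum_i a_{ii})(\frac1n\sum_k b_{kk})$, which is the second term of $b_n/6$. Its fluctuation has order $\frac{1}{n^2h_1^2}\cdot\sqrt n\cdot h_1 = n^{-3/2}h_1^{-1}$, because $a_{ii}=O(h_1)$ and $b_{kk}$ varies by $O(1)$. This is $o(\sqrt{h_1/n})$ precisely because $nh_1^2\to\infty$. Similarly, the expectation of the off-diagonal part is $\frac{1}{n^2h_1^2}\sum_{i\ne j}a_{ij}\cdot\frac{1}{n(n-1)}\sum_{k\neq l}b_{kl}$. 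This reproduces the first term of $b_n/6$, so $6\,\mathbb E T_n-2=b_n$ exactly, and it remains to show that $\sqrt{n/h_1}\,(T_n-\mathbb E T_n)\to\mathcal N(0,4\sigma_0^2)$.

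For the off-diagonal part we apply the standard Hoeffding-type projection for double-indexed permutation statistics (Zhao–Bai–Chao–Liang / Barbour–Eagleson). Centre the matrices as $\tilde a_{ij}=a_{ij}-\bar a_{i\cdot}-\bar a_{\cdot j}+\bar a$, and similarly for $b$. The statistic then decomposes into a linear part $\sum_i \alpha_i\beta_{\pi(i)}$, with $\alpha_i$ the row means of $a$ and $\beta_k$ the row means of $b$, plus a degenerate part $\sum_{i\ne j}\tilde a_{ij}\tilde b_{\pi(i)\pi(j)}$.

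The key computation concerns the linear part. In the interior, $\alpha_i=\sum_j a_{ij}/n\approx h_1^2\int K\cdot\int K/ 1$, i.e.\ it is essentially constant. Near the boundary, within distance $h_1$ of $0$ or $1$, it deviates because the kernel mass is truncated. Writing $i/n = 1-t h_1$ (or $t h_1$), the deviation is proportional to $h_1^2\bigl(1-\int_{-t}^1K(v)\bar K(v+t)\,dv\bigr)$ up to the appropriate normalisation; I expect this to be exactly the boundary quantity appearing in $\sigma_0^2$. Meanwhile $\beta_k=\int_0^1 \bar K((v-k/n)/h_2)\,dv\cdot(\ldots)\to$ a function like $(1-k/n)$ for small $h_2$, with variance $1/12$-type constants. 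The variance of $\sum\alpha_i\beta_{\pi(i)}$ is $\approx \frac{1}{n}\sum_i(\alpha_i-\bar\alpha)^2\sum_k(\beta_k-\bar\beta)^2$. Only $O(nh_1)$ indices $i$ contribute, so after the normalisation $1/(n^2h_1^2)$ the variance is of order $h_1/n$. The constant is obtained as a Riemann sum in $t\in[0,1]$, over two boundaries, times $\int_0^1(\int_v^1\ldots)^2$-type moments, which I expect to yield $\frac{2}{45}\cdot 4$ times the $t$-integral. A normal limit for this linear rank statistic follows from Hájek's CLT, using the Noether condition $\max_i(\alpha_i-\bar\alpha)^2/\sum_i(\alpha_i-\bar\alpha)^2=O(1/(nh_1))\to0$.

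The main obstacle is showing that the degenerate part is negligible, i.e.\ that its variance, of order $\frac{1}{n^4h_1^4}\cdot\frac{1}{n^2}\sum_{ij}\tilde a_{ij}^2\sum_{kl}\tilde b_{kl}^2$, is $o(h_1/n)$. Here $\tilde a_{ij}=O(h_1)$ on the band $|i-j|\le 2nh_1$, giving $\sum\tilde a^2=O(n^2h_1^3)$, and $\sum\tilde b^2=O(n^2)$. The variance is therefore $O(n^{-2}h_1^{-1})$, which is $o(h_1/n)$ iff $nh_1^2\to\infty$. I would verify this carefully using the exact variance formula for permutation quadratic forms. Bias from replacing the sums by integrals is controlled by $nh_1^4\to0$, and $nh_2\to\infty$ ensures that the $b$-moments converge to their $h_2\to0$ limits. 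Combining the pieces via Slutsky gives the claim with variance $36\cdot4\sigma_0^2=144\sigma_0^2$.
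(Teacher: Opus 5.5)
Your route is essentially the paper's. You use the permutation representation, dispose of the diagonal term by a variance bound, and split the off-diagonal sum Hoeffding-style into a linear rank statistic plus a remainder of variance $O(1/(n^2h_1))$, negligible because $nh_1^2\to\infty$. You then apply Hoeffding's combinatorial CLT with a Noether ratio of order $1/(nh_1)$. The paper centres only the $\bar K$-matrix, writes $S_n=D_n+2T_n-C_n$ with $T_n=\frac{1}{n^2h_1}\sum_i\bar b_{\pi_i}L_i$, and computes $\var(D_n)$ exactly via overlap patterns. Your double centring is equivalent. The linear part is to leading order $2n\sum_i(\alpha_i-\bar\alpha)(\beta_{\pi(i)}-\bar\beta)$, and the factor $2$ is what produces $4\sigma_0^2$.

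The gap is the limiting variance. It is the quantitative content of the theorem, you leave it at ``I expect'', and the ingredients you sketch would not produce it.

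First, the row means of the $\bar K$-matrix are not ``like $1-k/n$''; that is the diagonal $b_{kk}$. Since $b_{kl}\approx 1-\max(k,l)/n$, one has $\beta_k\approx\frac12\bigl(1-(k/n)^2\bigr)$. Its empirical variance tends to $\frac14\bigl(\frac15-\frac19\bigr)=\frac1{45}$, and this, not a $\frac1{12}$, is where $\frac{2}{45}$ comes from.

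Second, the boundary layer of $\alpha_i$ has width $2h_1$, not $h_1$, because $a_{ij}\neq 0$ whenever $|i-j|\le 2nh_1$. For $i=nh_1t$ one finds $\alpha_i/h_1^2\to g(t):=\int_{\max(-t,-1)}^1K(v)\bar K(v+t)\,dv$. For $t\in[1,2)$ this equals $P(V_2-V_1\le t)<1$, with $V_1,V_2$ i.i.d.\ with density $K$. So the Riemann sum runs over $t\in[0,2]$, and completing your sketch correctly gives asymptotic variance $4\cdot\frac{2}{45}\int_0^2\bigl(1-g(t)\bigr)^2dt$ for $\sqrt{n/h_1}\,(\hat\tau_n^2-\ee\hat\tau_n^2)$. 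For the uniform kernel, $1-g(t)=\frac58-\frac t2$ on $[0,1]$ and $(2-t)^2/8$ on $[1,2]$. These contribute $\frac{31}{192}$ and $\frac1{320}$ respectively: small, but not zero.

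The paper's own proof has the same slip. It asserts $L_i/(nh_1)\to1$ for all $i\in[nh_1,n(1-h_1)]$, which fails for $nh_1\le i<2nh_1$, and that is how it arrives at $\int_0^1$ in the definition of $\sigma_0^2$. So a careful completion of your argument will not confirm the stated constant. You have to carry out the $\alpha$- and $\beta$-computations explicitly rather than match them to the target, and they yield the $\int_0^2$ version.
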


\begin{remark}[Bias]
Note that, one may choose $h_2$ sufficiently small so that
\[
\frac{1}{n(n - 1)} \sum_{i\neq j}
\int_{0}^{1}
\bar{K}\Big(\frac{v - i/n}{h_2}\Big)
\bar{K}\Big(\frac{v - j/n}{h_2}\Big) dv
\]
can be replaced by its limit $(n - 2)/3n$. Hence the expression of $b_n$ may be replaced by the simpler surrogate
\begin{align*}
    \tilde{b}_n 
    := \frac{2(n-2)}{n^3 h_1^2}
    \sum_{i=1}^n \sum_{j\neq i}
    \int K\Big(\frac{u - i/n}{h_1}\Big)
         K\Big(\frac{u - j/n}{h_1}\Big) du - 2,
\end{align*}
since the diagonal terms are of smaller order. 
Nevertheless, we retain the original formulation, as this approximation would sacrifice precision, especially for small sample sizes $n$.

Additionally, note that the expression of $b_n$ cannot be further simplified by replacing the integral
\[
{1 \over h_1}
\int_{0}^{1}
K\Big(\frac{u - i/n}{h_1}\Big)
K\Big(\frac{u - j/n}{h_1}\Big)du
\]
by $(K * K)\left((i - j)/(n h_1)\right)$, where $K * K$ denotes the convolution of $K$ with itself. This is because boundary effects contribute to the bias at the same order as the interior terms.
\end{remark}

\begin{remark}[Local alternatives and power comparison]
The asymptotic distribution in Theorem~2.1 has important implications for the power of tests for independence based on $\hat r_n$. Under the null hypothesis of independence, $\hat r_n$ admits a central limit theorem with centering $b_n$ and scaling $\sqrt{n/h_1}$, which is strictly faster than the usual $\sqrt n$ rate. This accelerated rate implies that departures from independence of smaller order can be detected.

To make this precise, consider a sequence of local alternatives under which the population coefficient satisfies
\[
\xi_n := \xi(X,Y) \to 0 \quad \text{as } n \to \infty .
\]
A test based on $\hat{r}_n$ rejects for large values if
\begin{align}
\sqrt{\frac{n}{h_1}}(\hat{r}_n - b_n) > 12 \sigma_0 u_{1-\alpha},
 \label{test}   
\end{align}
where $u_{1-\alpha}$ denotes the $(1-\alpha)$-quantile of the standard normal distribution.
Therefore,  nontrivial power is achieved whenever
\[
\sqrt{\frac{n}{h_1}}\xi_n \to c > 0 ,
\]
and the detection boundary for $\hat{r}_n$ is of order
\[
\xi_n \asymp (n/h_1)^{-1/2},
\]
or, equivalently, as argued in~\citet{auddy2024exact} for smooth parametric alternatives such as Gaussian correlations,
$
\rho_n \asymp (n/h_1)^{-1/4}.$

By contrast, a test based on Chatterjee’s rank correlation estimator  satisfies a classical $\sqrt{n}$-CLT under independence, and it is known that its optimal detection boundary is $\rho_n \asymp n^{-1/4}$.
Since $h_1 \to 0$, we have $(n/h_1)^{-1/4} \ll n^{-1/4}$, showing that tests based on $\hat r_n$ are sensitive to alternatives converging to the null at a strictly faster rate.

This comparison is in line with the empirical findings reported in \citet{shidrtonhan2021a} and we also illustrate it by  a small simulation study in the following section.
 To rigorously establish this detection threshold, it is necessary to analyze the asymptotic behaviour of the statistic under shrinking alternatives, which is not possible by a CLT for double-indexed linear permutation statistics as  used in Section \ref{sec4} for the proof of Theorem~\ref{thm:taunCLT}. In fact such a result could be obtained by a more refined analysis of the arguments in \citet{dette2013copula} studying the boundary effects of the kernel estimator $\hat r_n$.  
\end{remark}

\section{Simulation}
In~\citet{shidrtonhan2021a}, it was shown theoretically that $\hat{\xi}_n$, introduced in~\citet{Chatterjee21}, can exhibit low power for testing independence. Subsequently, \citet{auddy2024exact} established that the exact detection boundary of a test based on $\hat{\xi}_n$ is $n^{-1/4}$, which is a suboptimal threshold for detecting dependence. On the other hand, empirical studies in~\citet{shidrtonhan2021a} suggested that the test \eqref{test} based on the Dette--Siburg--Stoimenov coefficient $\hat{r}_n$ exhibits nontrivial power against certain alternatives in finite-sample simulations.

Our results indicate that the detection threshold of $\hat r_n$ is $(n/h_1)^{-1/4}$, which is smaller than that of $\hat \xi_n$. To examine this conjecture, we consider a setting in which $X$ and $Y$ are standard normal random variables with correlation coefficient $\rho_n$, where $\rho_n \to 0$ as $n \to \infty$. 

For $\hat \xi_n$, the test is conducted using its asymptotic null distribution, while for $\hat r_n$ we rely on the asymptotic distribution established in Theorem~\ref{thm:taunCLT}. The significance level is set to $\alpha = 0.05$. The resulting empirical rejection probabilities are reported in Table~\ref{tb:asymp}. Throughout the simulations, we employ Epanechnikov and Triangular kernels with bandwidths $h_1 = n^{-0.3}$ and $h_2 = n^{-0.8}$.

\begin{table}[h]
\centering
\begin{tabular}{
c
S[table-format=1.3, table-column-width=0.85cm]
S[table-format=1.3, table-column-width=0.85cm]
S[table-format=1.3, table-column-width=0.85cm]
S[table-format=1.3, table-column-width=0.85cm]
S[table-format=1.3, table-column-width=0.85cm]
S[table-format=1.3, table-column-width=0.85cm]
S[table-format=1.3, table-column-width=0.85cm]
S[table-format=1.3, table-column-width=0.85cm]
S[table-format=1.3, table-column-width=0.85cm]
}
\toprule
& \multicolumn{3}{c}{$\rho_n = n^{-1/4}$}
& \multicolumn{3}{c}{$\rho_n = (n/h_1)^{-1/4}$}
& \multicolumn{3}{c}{$\rho_n = 0$} \\
\cmidrule(lr){2-4} \cmidrule(lr){5-7} \cmidrule(lr){8-10}
$n$
& {$\hat r_n$[E]} & {$\hat r_n$[T]} & {$\hat \xi_n$}
& {$\hat r_n$[E]} & {$\hat r_n$[T]} & {$\hat \xi_n$}
& {$\hat r_n$[E]} & {$\hat r_n$[T]} & {$\hat \xi_n$} \\
\midrule
100   & 0.144 & 0.196 & 0.182 & 0.064 & 0.086 & 0.062 & 0.012 & 0.012 & 0.024 \\
500   & 0.292 & 0.344 & 0.228 & 0.118 & 0.132 & 0.108 & 0.044 & 0.050 & 0.054 \\
1000  & 0.382 & 0.462 & 0.190 & 0.110 & 0.122 & 0.084 & 0.044 & 0.048 & 0.032 \\
5000  & 0.602 & 0.638 & 0.210 & 0.122 & 0.134 & 0.074 & 0.038 & 0.044 & 0.036 \\
10000 & 0.712 & 0.758 & 0.214 & 0.134 & 0.144 & 0.072 & 0.066 & 0.068 & 0.044 \\
\bottomrule
\end{tabular}
\caption{\it Empirical rejection rates of the independence test \eqref{test} based on  $\hat r_n$ (using the Epanechnikov [E] and the Triangular [T] kernel) and the test based on Chatterjee's rank correlation $\hat \xi_n$. The results are obtained by  $500$ replications using the quantiles from the  respective asymptotic distributions.}
\label{tb:asymp}
\end{table}

Table~\ref{tb:asymp} clearly indicates that when $\rho_n = n^{-1/4}$, the test \eqref{test} based on  $\hat{r}_n$ exhibits substantially higher power than the test based on $\hat{\xi}_n$, with the power of $\hat{r}_n$ approaching 1 as the sample size $n$ increases. Moreover, when $\rho_n = (n/h_1)^{-1/4}$, the test based on $\hat{\xi}_n$ displays only trivial power, whereas the test based on $\hat{r}_n$ continues to demonstrate comparatively higher power.

Finally, we display in Figure~\ref{fig:CLT} a histogram of the normalized statistic $\sqrt{n/h_1}(\hat{r}_n - b_n)/12\sigma_0$ under the null hypothesis of  independent standard normal distributed data. The estimator $\hat{r}_n$ is constructed using the Epanechnikov kernel with bandwidths $h_1 = n^{-0.3}$ and $h_2 = n^{-0.8}$, based on a sample size of $n = 10{,}000$ over $2000$ iterations. The histogram exhibits an excellent agreement with the standard normal distribution.

\begin{figure}[h]
  \centering
    \includegraphics[width=0.5\textwidth]{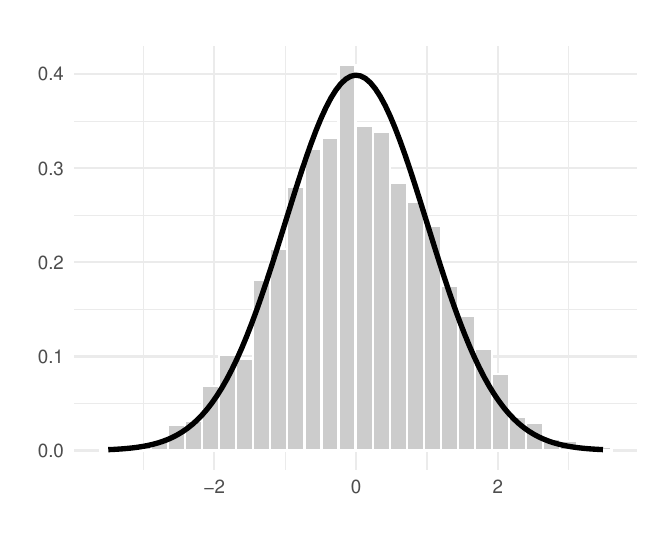}
    \caption{\it Histogram of $\sqrt{n/h_1}(\hat{r}_n - b_n)/12\sigma_0$ (using Epanechnikov kernel with $h_1 = n^{-0.3}$ and $h_2 = n^{-0.8}$) for $n = 10000$ over $2000$ iterations when $X$ and $Y$ independent standard normal distributed. Solid line: density of the  standard density distribution.}
    \label{fig:CLT}
\end{figure}

\section{Proofs} \label{sec4}
   \setcounter{equation}{0}
   
To prepare for the proof of Theorem~\ref{thm:taunCLT}, recall that $\hat{r}_n = 6\hat{\tau}_n^2 - 2$. Therefore, it suffices to establish the asymptotic normality of $\hat{\tau}_n^2$, which is the statement of the following result.

\begin{thm}\label{thm:BnCLT}
  Under the assumptions of Theorem  \ref{thm:taunCLT} we have 
    \begin{align*}
        \sqrt{\frac{n}{h_1}}(\hat{\tau}^2_n - \hat{b}_n)\stackrel{\mathcal{D}}{\longrightarrow} \mathcal{N}(0, 4\sigma_0^2),
    \end{align*}
    where 
    \begin{align*}
    \hat{b}_n :=& \frac{1}{n^2h_1^2} \sum_{i\neq j}\int_{0}^{1}K\Big(\frac{u - i/n}{h_1}\Big)K\Big(\frac{u - j/n}{h_1}\Big)du  \\
    & ~~~~~~~~~~~\times \frac{1}{n(n - 1)} \sum_{i\neq j}\int_{0}^{1}\bar{K}\Big(\frac{v - i/n}{h_2}\Big)\bar{K}\Big(\frac{v - j/n}{h_2}\Big)dv  \\
    & ~~~~~~~~~~~ + \frac{1}{n^2h_1^2}\Big(\sum_{i = 1}^n \int_{0}^1 K^2\Big(\frac{u - i/n}{h_1}\Big)du\Big)\Big(\frac{1}{n}\sum_{i = 1}^n \int_{0}^1\bar{K}^2\Big(\frac{v - i/n}{h_2}\Big)dv\Big),
    \end{align*}
       and $\sigma_0^2$ defined in \eqref{eq:sigma0}.
\end{thm}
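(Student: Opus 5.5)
Under independence (and continuity) the ranks $R_i=n\hat F_{n1}(X_i)$ and $S_i=n\hat F_{n2}(Y_i)$ are such that, after relabelling the observations by the rank of $X$, the vector $(S_1,\dots,S_n)$ is a uniformly distributed random permutation $\pi$ of $\{1,\dots,n\}$. Expanding the square in \eqref{d2} then gives an exact representation
\begin{align*}
\hat\tau_n^2=\sum_{i,j=1}^n a_{ij}\, b_{\pi(i)\pi(j)},\qquad
a_{ij}:=\frac{1}{n^2h_1^2}\int_0^1 K\Big(\frac{u-i/n}{h_1}\Big)K\Big(\frac{u-j/n}{h_1}\Big)du,\quad
b_{kl}:=\int_0^1\bar K\Big(\frac{v-k/n}{h_2}\Big)\bar K\Big(\frac{v-l/n}{h_2}\Big)dv .
\end{align*}
This is a double-indexed linear permutation statistic. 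I split it into the diagonal part $\sum_i a_{ii}b_{\pi(i)\pi(i)}$ and the off-diagonal part $W_n=\sum_{i\ne j}a_{ij}b_{\pi(i)\pi(j)}$. The exact expectations are elementary: $\ee\, b_{\pi(i)\pi(i)}=n^{-1}\sum_k b_{kk}$, and $\ee\, b_{\pi(i)\pi(j)}=(n(n-1))^{-1}\sum_{k\ne l}b_{kl}$ for $i\ne j$. Together these give exactly $\ee\hat\tau_n^2=\hat b_n$, which explains the centering.

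For the diagonal part, $a_{ii}=O(1/(n^2h_1))$ and $b_{kk}$ is bounded. Its fluctuation is therefore a simple linear permutation statistic with variance $O(n\cdot n^{-4}h_1^{-2})=O(n^{-3}h_1^{-2})$. After scaling by $n/h_1$ this becomes $O(n^{-2}h_1^{-3})\to0$, since $nh_1^2\to\infty$ forces $n^2h_1^3\ge (nh_1^2)^{3/2}n^{1/2}\to\infty$. So only $W_n$ matters.

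For $W_n$ I invoke a CLT for double-indexed permutation statistics $\sum_{i\ne j}a_{ij}b_{\pi(i)\pi(j)}$, in the style of Barbour--Chen or Zhao--Bai--Chao--Liang. First I center the arrays so that their row and column means vanish: $\tilde a_{ij}=a_{ij}-a_{i\cdot}-a_{\cdot j}+a_{\cdot\cdot}$, and likewise for $b$. This splits $W_n-\ee W_n$ into a projection part $L_n$, which is a single-indexed linear statistic $\sum_i c_{i\pi(i)}$ built from the row sums, and a degenerate part $Q_n=\sum_{i\ne j}\tilde a_{ij}\tilde b_{\pi(i)\pi(j)}$.

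The row sums $\sum_j a_{ij}\approx n^{-1}h_1^{-1}\int_0^1K((u-i/n)/h_1)du$ are constant, equal to $1/n$, in the interior. They deviate from it only for $i/n$ within $h_1$ of the boundary. The column sums of $b$, $\sum_l b_{kl}\approx n\int_0^1\bar K((v-k/n)/h_2)(v)dv$, are smooth functions of $k/n$ plus boundary corrections of width $h_2$. The product structure therefore makes $\var(L_n)$ of order $n\cdot(h_1n)\cdot n^{-2}\cdot n^{-2}\cdot n^{2}$. Tracking the orders carefully, I expect $\var L_n=O(h_1/n)$ times something vanishing, or at least of a lower order than $h_1/n$.

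This is where I expect the main obstacle. The boundary region of width $h_1$ contributes at exactly the critical order, and the constant $\int_{-t}^1K(v)\bar K(v+t)dv$ in $\sigma_0^2$ is a boundary quantity. So $L_n$ may in fact contribute to the limiting variance, and I would compute $\var L_n$ and $\var Q_n$ exactly rather than discard either one. The variance of $Q_n$ is asymptotically $\frac{2}{n^2}\sum_{i\ne j}\tilde a_{ij}^2\cdot\frac{1}{n^2}\sum_{k\ne l}\tilde b_{kl}^2\cdot(1+o(1))$. For $b$, with $h_2\to0$ we get $b_{kl}\approx 1-\max(k,l)/n$; centering it gives the kernel $\min(s,t)-st$-type structure, and the Riemann sum of its square yields the constant $1/45$ together with the factor $2$. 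For $a$, the Riemann sum of $\tilde a_{ij}^2$ concentrates on $|i-j|\le 2nh_1$. Substituting $i=nh_1 s$ near the boundary and $j-i=nh_1 t$, and using the symmetry of $K$, produces $(n^3h_1)^{-1}$ times an integral in which the interior part of $(K*K)$ and the boundary-truncated part combine. Combining this with the boundary behaviour of the row sums should give $\int_0^1(1-\int_{-t}^1K(v)\bar K(v+t)dv)^2dt$.

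The plan is therefore to compute $\var(\sqrt{n/h_1}\,(W_n-\ee W_n))\to 4\sigma_0^2$ directly from the exact permutation variance formula for double-indexed statistics. Then I verify the Lindeberg/Lyapunov-type condition of the cited CLT. That condition requires $\max_{ij}\tilde a_{ij}^2\max_{kl}\tilde b_{kl}^2$ to be negligible relative to the variance, together with a fourth-moment ratio condition. Both follow from $|a_{ij}|=O(1/(n^2h_1))$ and the band structure, because the ratio is of order $1/(nh_1)\to0$. The condition $nh_1^4\to0$ is used only to control the discretization errors when replacing sums by integrals, and $nh_2\to\infty$ ensures that $\bar K(\cdot/h_2)$ is resolved on the grid. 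The variance bookkeeping with boundary terms is the hard step.
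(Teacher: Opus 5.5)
\paragraph{Verdict.} Your set-up matches the paper's: the uniform permutation $\pi$, the identity $\ee\hat\tau_n^2=\hat b_n$, the negligible diagonal, and the split of the off-diagonal sum into a linear projection and a degenerate remainder. With exact off-diagonal centring of $b$, the centring of $a$ drops out. Your $L_n$ is then, up to an additive constant and a factor $1+O(1/n)$, equal to $2\sum_i r_i\beta_{\pi(i)}$, where $r_i=\sum_{j\ne i}a_{ij}$ and $\beta_k$ are the off-diagonal row means of $b$; this is the paper's $2T_n$. The gap is in the step you flag as the hard one: your heuristics have the orders of the two pieces reversed.

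\paragraph{The projection carries the whole limit.} By Hoeffding's variance formula,
\[
\var(L_n)\approx \frac4n\sum_i(r_i-\bar r)^2\sum_k(\beta_k-\bar\beta)^2 .
\]
In your normalisation, $nr_i\to1$ in the interior, while $nr_i\to g(t):=\int_{-t}^1K(v)\bar K(v+t)\,dv<1$ at $i=nh_1t$ near either end. Also $\frac1n\sum_k(\beta_k-\bar\beta)^2\to\var\big((1-U^2)/2\big)=\frac1{45}$ for $U$ uniform. Hence $\var(L_n)$ is of exact order $h_1/n$, namely $\frac{8h_1}{45n}\int(1-g(t))^2dt\,(1+o(1))$. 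So $1/45$ is the variance of the row means of $b$, the factor $2$ in $\sigma_0^2$ counts the two boundary layers, and the $4$ comes from the coefficient $2$ in front of $T_n$.

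\paragraph{The degenerate part is negligible.} Double centring is an orthogonal projection, so $\sum_{i\ne j}\tilde a_{ij}^2\le\sum_{i\ne j}a_{ij}^2\sim\|K*K\|_2^2/(n^2h_1)$. This quantity carries no boundary integral at leading order. Moreover $\var(Q_n)\sim 2\mu_2\sum_{i\ne j}\tilde a_{ij}^2$ with $\mu_2=\frac{1}{n(n-1)}\sum_{k\ne l}\tilde b_{kl}^2\to\frac1{90}$, so your $2\cdot\frac1{90}=\frac1{45}$ is only a numerical coincidence. Thus $\frac{n}{h_1}\var(Q_n)=O\big(1/(nh_1^2)\big)\to0$. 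This is exactly where $nh_1^2\to\infty$ is needed, a role your proposal never identifies. Your plan of extracting the boundary constant from $\sum\tilde a_{ij}^2$ cannot work, and the expectation $\var(L_n)=o(h_1/n)$ is false.

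\paragraph{How to repair it.} Once this is corrected, no double-indexed CLT is needed. The paper shows the degenerate part is $o_P(\sqrt{h_1/n})$ via an exact overlap-pattern variance formula. It then applies Hoeffding's combinatorial CLT to the single-indexed $T_n$. The Noether-type condition holds at rate $O(1/(nh_1))$, because $\max_i(r_i-\bar r)^2/\sum_i(r_i-\bar r)^2=O(1/(nh_1))$ and $\max_k(\beta_k-\bar\beta)^2/\sum_k(\beta_k-\bar\beta)^2=O(1/n)$.

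\paragraph{A caution on the boundary layer.} When you compute $\sum_i(r_i-\bar r)^2$, note that $\sum_jK((u-j/n)/h_1)\approx nh_1\bar K(u/h_1)$ is deficient for every $u<h_1$ when $K$ charges all of $[-1,1]$. Consequently $g(t)<1$ for all $t<2$; for the uniform kernel, $1-g(t)=(2-t)^2/8$ on $[1,2]$. So the boundary layer has width $2nh_1$. The paper's proof asserts $L_i/(nh_1)\to1$ already for $i\ge nh_1$, which appears to be an oversight. If so, the $t$-integral in \eqref{eq:sigma0} should run over $[0,2]$ rather than $[0,1]$.
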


\subsection{Proof of Theorem~\ref{thm:taunCLT}}
Since $\hat r_n = 6\hat\tau_n^2 - 2$, Theorem~\ref{thm:BnCLT} implies
\[
\sqrt{\frac{n}{h_1}} \bigl(\hat r_n - b_n\bigr)
    = 6 \sqrt{\frac{n}{h_1}}(\hat\tau_n^2 - \hat{b}_n)
    \stackrel{\mathcal{D}}{\longrightarrow} \mathcal{N}(0, 144\sigma_0^2).
\]
This establishes the claimed asymptotic normality of $\hat r_n$.

\subsection{Proof of Theorem~\ref{thm:BnCLT}}
Define two random  permutations $\phi$ and $\lambda$ on $\{1, \ldots, n\}$ such that $\phi(i) := n\hat{F}_{n1}(X_i)$ and $\lambda(i) := n\hat{F}_{n2}(Y_i)$. If $X$ and $Y$ are independent, $\phi$ and $\lambda$ are independent as well and consequently $\pi := \lambda\phi^{-1}$ is a uniformly distributed random permutation on $\{1, \ldots, n\}$. For notational convenience, write $\pi_i = \pi(i)$.
Note that 
\begin{align*}
    \hat{\tau}^2_n &= \int\int\Big(\frac{1}{nh_1}\sum_{i = 1}^n K\Big (\frac{u - \phi(i)/n}{h_1}\Big )\bar{K}\Big (\frac{v - \lambda(i)/n}{h_2}\Big )\Big )^2 du dv \\
    &= \int\int\Big (\frac{1}{nh_1}\sum_{i = 1}^n K\Big (\frac{u - i/n}{h_1}\Big )\bar{K}\Big (\frac{v - \pi(i)/n}{h_2}\Big )\Big )^2 du dv
\end{align*}
By expanding the integrand we have
\begin{align}\label{eq:fullIndex}
    \hat{\tau}^2_n &= \frac{1}{n^2h_1}\sum_{i = 1}^n \sum_{j = 1}^n a_{ij}B_{\pi_i\pi_j},
\end{align}
where 
\begin{align*}
    B_{ij} := \int_0^1 \bar{K}\Big(\frac{v - i/n}{h_2}\Big ) \bar{K}\Big (\frac{v - j/n}{h_2}\Big )dv,\qquad
    a_{ij} := \frac{1}{h_1}\int K\Big (\frac{u - i/n}{h_1}\Big )K\Big (\frac{u - j/n}{h_1}\Big )du.
\end{align*}

By dividing the indices in \eqref{eq:fullIndex} into $i\neq j$ and $i = j$ we have
\begin{align*}
    \hat{\tau}^2_n &= S_n + b_{n0},
\end{align*}
where 
\begin{align*}
    S_n := \frac{1}{n^2h_1}\sum_{i = 1}^n\sum_{j \neq i}^n a_{ij}B_{\pi_i\pi_j}, \qquad b_{n0}:=\frac{1}{n^2h_1}\sum_{i = 1}^n a_{ii}B_{\pi_i\pi_i}.
\end{align*}
Note that 
\begin{align*}
    \ee[b_{n0}] = \frac{1}{n^2h_1^2}\Big(\sum_{i = 1}^n \int_{0}^1 K^2\Big(\frac{u - i/n}{h_1}\Big)du\Big)\Big(\frac{1}{n}\sum_{i = 1}^n \int_{0}^1\bar{K}^2\Big(\frac{v - i/n}{h_2}\Big)dv\Big),
\end{align*}
and
\begin{align*}
    \var(b_{n0}) = O(\frac{1}{n^3h_1^2}).
\end{align*}
Therefore it is enough to study asymptotic behaviour of $S_n$.

Defining 
\begin{align*}
    b_{ij} := B_{ij} - 
    \bar{b}_i - \bar{b}_j + \bar{b} , \qquad\bar{b}_i := \frac{1}{n - 1}\sum_{k\neq i}B_{ik},\qquad\bar{b} := \frac{1}{n(n-1)}\sum_{i\neq j} B_{ij}, 
\end{align*}
we obtain the representation

\begin{align}
    S_n &= \frac{1}{n^2h_1}\sum_{i = 1}^n\sum_{j\neq i}a_{ij}b_{\pi_i\pi_j} + \frac{1}{n^2h_1}\sum_{i = 1}^n\sum_{j\neq i}a_{ij}\bar{b}_{\pi_i} + \frac{1}{n^2h_1}\sum_{i = 1}^n\sum_{j\neq i}a_{ij}\bar{b}_{\pi_j} - \frac{\bar{b}}{n^2h_1}\sum_{i = 1}^n\sum_{j\neq i}a_{ij} \nonumber\\
    &= D_n + 2T_n - C_n. \label{hd11a}
\end{align}
where 
\begin{align*}
    T_n := \frac{1}{n^2h_1}\sum_{i = 1}^n \bar{b}_{\pi_i}\Big (\sum_{j\neq i}a_{ij}\Big), \qquad
    D_n := \frac{1}{n^2h_1}\sum_{i = 1}^n\sum_{j\neq i}a_{ij}b_{\pi_i\pi_j}, \qquad
    C_n := \frac{\bar{b}}{n^2h_1}\sum_{i = 1}^n\sum_{j\neq i}a_{ij}.
\end{align*}
Note that $C_n$ is a deterministic constant and it is easy to see that 
%$C_n = \ee[T_n]$,
\begin{align*}
    \ee[T_n] = \frac{\ee[\bar{b}_{\pi_i}]}{n^2h_1}\sum_{i = 1}^n \Big(\sum_{j\neq i}a_{ij}\Big) = \frac{1}{n^2h_1} \frac{1}{n}\sum_{\ell  = 1}^n \bar{b}_\ell \sum_{i = 1}^n \Big(\sum_{j\neq i}a_{ij}\Big) = \frac{\bar{b}}{n^2h_1}  \sum_{i = 1}^n \Big(\sum_{j\neq i}a_{ij}\Big) = C_n .
\end{align*}

The following lemmas describe the asymptotic behaviour of $D_n$ and $T_n$ and will be proved in Section \ref{sec43}. 

\begin{lmm}\label{lmm:Tn}
    For continuous and independent $X$ and $Y$ we have 
    \begin{align*}
        \sqrt{\frac{n}{h_1}}\left(T_n - \ee[T_n]\right)\rightarrow \mathcal{N}(0, \sigma_0^2),
    \end{align*}
    where 
    \begin{align*}
        \lefteqn{\ee[T_n] = \frac{1}{n^2h_1^2}\sum_{i\neq j}\int_{0}^{1}K\Big(\frac{u - i/n}{h_1}\Big)K\Big(\frac{u - j/n}{h_1}\Big)du\times}\\
        &\qquad\qquad\qquad\qquad\frac{1}{n(n  -1)}\sum_{i\neq j}\int_{0}^{1}\bar{K}\Big(\frac{v - i/n}{h_2}\Big)\bar{K}\Big(\frac{v - j/n}{h_2}\Big)dv,
    \end{align*}
and $\sigma_0^2$ defined in \eqref{eq:sigma0}.
\end{lmm}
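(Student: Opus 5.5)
The plan is to observe that $T_n$ is a \emph{single}-indexed linear permutation statistic and to apply Hoeffding's combinatorial central limit theorem. Writing
\[
c_i := \sum_{j\neq i} a_{ij}, \qquad d_\ell := \bar b_\ell ,
\]
we have $T_n = (n^2h_1)^{-1}\sum_{i=1}^n c_i\, d_{\pi_i}$, where $\pi$ is uniform on the permutations of $\{1,\dots,n\}$. The expectation $\ee[T_n]=C_n$ has already been computed above and coincides with the claimed formula. Hoeffding's theorem gives
\[
\var(T_n)=\frac{1}{n^4h_1^2}\cdot\frac{1}{n-1}\sum_i (c_i-\bar c)^2\sum_\ell (d_\ell-\bar d)^2 .
\]
It also gives asymptotic normality as soon as
\[
n\max_i(c_i-\bar c)^2\max_\ell (d_\ell-\bar d)^2\Big/\Big(\sum_i(c_i-\bar c)^2\sum_\ell(d_\ell-\bar d)^2\Big)\to0 .
\]
So the work reduces to the asymptotics of the two sums of squares.

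\emph{Step 1 (the $d$'s).} Since $nh_2\to\infty$ and $h_2\to 0$, we have $B_{ij}=1-\max(i,j)/n+O(h_2+1/n)$ uniformly. Hence
\[
d_\ell = \tfrac12\bigl(1-(\ell/n)^2\bigr)+o(1)
\]
uniformly in $\ell$, and $\frac1n\sum_\ell(d_\ell-\bar d)^2\to\var(U^2)/4=1/45$ for $U$ uniform on $[0,1]$. In particular the $d_\ell$ are bounded.

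\emph{Step 2 (the $c$'s, the main obstacle).} Here the boundary layers carry the whole variance, so the approximation must be done carefully.

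In the interior, i.e.\ for $i/n\in[2h_1,1-2h_1]$, a Riemann-sum argument gives
\[
\frac{c_i}{nh_1}=\frac{1}{nh_1}\sum_j (K*K)\Big(\frac{i-j}{nh_1}\Big)-\frac{a_{ii}}{nh_1}=1+O\big((nh_1)^{-1}\big),
\]
so $c_i-nh_1=O(1)$.

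Near the boundary, say $i/n=th_1$ with $t\in[0,1]$, I would replace $\frac{1}{nh_1}\sum_j K((u-j/n)/h_1)$ by $\bar K(u/h_1)$ up to $O((nh_1)^{-1})$. Substituting $u=h_1w$ then yields
\[
\frac{c_i}{nh_1}=g(t)+o(1),\qquad g(t):=\int_{-t}^1K(v)\bar K(v+t)\,dv .
\]
The upper boundary behaves identically by the symmetry of $K$. Since the boundary layers have total size $O(nh_1)$, it follows that $\bar c=nh_1(1+O(h_1))$.

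Consequently:
\begin{itemize}
\item the two boundary layers contribute $n^2h_1^2\cdot 2nh_1\int_0^1(1-g(t))^2dt\,(1+o(1))$ to $\sum_i(c_i-\bar c)^2$;
\item the interior contributes $O\big(n+n\cdot n^2h_1^4\big)=o(n^3h_1^3)$, using $nh_1\to\infty$ and $h_1\to0$.
\end{itemize}
The delicate point is controlling the Riemann-sum errors uniformly, including at the edges of the layers. I would handle this using only that $K$ is bounded with bounded variation and supported on $[-1,1]$, so that all errors are $O((nh_1)^{-1})$ relative to the main term.

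\emph{Step 3 (conclusion).} Combining Steps 1 and 2,
\[
\var(T_n)=\frac{1}{n^5h_1^2}\cdot 2n^3h_1^3\int_0^1(1-g)^2\cdot\frac{n}{45}\,(1+o(1))=\frac{h_1}{n}\,\sigma_0^2(1+o(1)).
\]
For Hoeffding's condition, $\max_i|c_i-\bar c|=O(nh_1)$ and the $d_\ell$ are bounded, so the ratio is of order
\[
n\cdot n^2h_1^2\big/(n^3h_1^3\cdot n)=(nh_1)^{-1}\to0 .
\]
Hence $\sqrt{n/h_1}\,(T_n-\ee[T_n])\to\mathcal N(0,\sigma_0^2)$ by Hoeffding's combinatorial CLT.
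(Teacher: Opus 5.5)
Your route is the same as the paper's: Hoeffding's combinatorial CLT applied to $T_n=(n^2h_1)^{-1}\sum_i L_i\bar b_{\pi_i}$, the factor $1/45$ from the $\bar b_\ell$, all of the variance coming from boundary layers of the $L_i$, and a Hoeffding ratio of order $(nh_1)^{-1}$. Your interior bound $c_i-nh_1=O(1)$ is even cleaner than the paper's. But Step~2 has a genuine hole.

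You correctly start the interior at $i/n=2h_1$, since only there does the whole window $|i-j|<2nh_1$ fit. Yet your boundary layer is only $i/n=th_1$ with $t\in[0,1]$. The strip $h_1<i/n<2h_1$ (and its mirror image) is never treated, and it is not negligible. On this strip the support of $K((u-i/n)/h_1)$ lies in $[0,1]$, so $a_{ij}=(K*K)((i-j)/(nh_1))$. However, the constraint $j\ge1$ truncates the window, whence
\[
\frac{c_i}{nh_1}\to\int_{-2}^{t}(K*K)(s)\,ds=\int_{-1}^{1}K(v)\bar K(v+t)\,dv=g(t),\qquad t\in[1,2].
\]
This is just your own formula for $g$. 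Moreover $1-g(t)=P(V'-V>t)>0$ for $t<2$, with $V,V'$ i.i.d.\ with density $K$, whenever the support of $K$ is all of $[-1,1]$. For the uniform kernel, $1-g(t)=(2-t)^2/8$ on $[1,2]$. These $\asymp nh_1$ indices have $|c_i-\bar c|\asymp nh_1$. They add $2n^3h_1^3\int_1^2(1-g(t))^2dt$ to $\sum_i(c_i-\bar c)^2$, which is the same order as your main term.

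Your substitution argument from Step~2 works verbatim for all $t\in[0,2]$, and carrying it through gives
\[
\frac{n}{h_1}\var(T_n)\to\frac{2}{45}\int_0^2\bigl(1-g(t)\bigr)^2dt .
\]
For such kernels this is strictly larger than $\sigma_0^2$. So the argument does prove asymptotic normality, but not with the constant you (and the lemma) claim.

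The paper's proof makes the same slip. It asserts $L_i/(nh_1)\to1$ for $i\in[nh_1,n(1-h_1)]$. Yet its own identity $\int_0^1K(v+(i/n-x)/h_1)\,dx=h_1\bar K(v+i/(nh_1))$ still produces a deficit for $nh_1\le i<2nh_1$. Hence neither argument justifies the range of integration $\int_0^1$ in \eqref{eq:sigma0}: the boundary layer has width $2h_1$. The omitted contribution is numerically small for the Epanechnikov and triangular kernels; for the triangular kernel it is $\int_1^2(1-g)^2\,dt=1/5184$. This is consistent with the simulations.

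A minor point: $n=o(n^3h_1^3)$ requires $n^2h_1^3\to\infty$. This follows from $nh_1^2\to\infty$, but not from $nh_1\to\infty$ alone.
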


\begin{lmm}\label{lmm:Dn}
    For continuous and independent $X$ and $Y$ we have 
    \begin{align*}
        \ee[D_n] = 0, \qquad\var  (D_n  ) = O\Big (\frac{1}{n^2h_1}\Big ).
    \end{align*}
\end{lmm}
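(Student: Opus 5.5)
Since $b_{n0}$ and $T_n$ are already handled, the plan is to treat $D_n$ directly as a double-indexed permutation statistic. I would compute its first two moments exactly under the uniform law of $\pi$, which holds because $X$ and $Y$ are independent. I would use only two structural facts. The first concerns the centered matrix $b$; a short computation from the definitions gives
\[
\sum_{l\neq k} b_{kl} = \bar b_k - \bar b, \qquad \sum_{k}\sum_{l\neq k} b_{kl}=0 .
\]
So $b$ is centered in total but not exactly row-centered, and its row sums are only $O(1)$. The second concerns $a_{ij}$: since $0\le \bar K\le 1$, $K$ is bounded and $K$ is supported on $[-1,1]$, we have $|B_{kl}|\le 1$ and hence $|b_{kl}|\le 4$. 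Also $|a_{ij}|\le C$, and $a_{ij}=0$ unless $|i-j|\le 2nh_1$.

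\emph{Mean.} For $i\neq j$ the pair $(\pi_i,\pi_j)$ is uniform on ordered pairs $(k,l)$ with $k\neq l$. Hence $\ee[b_{\pi_i\pi_j}]=\frac{1}{n(n-1)}\sum_{k\neq l}b_{kl}=0$ by the total-centering identity, and $\ee[D_n]=0$ follows.

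\emph{Variance.} I would write
\[
\var(D_n)=\frac{1}{n^4h_1^2}\sum_{i\neq j}\sum_{k\neq l}a_{ij}a_{kl}\,\ee\big[b_{\pi_i\pi_j}b_{\pi_k\pi_l}\big]
\]
and split according to the overlap of $\{i,j\}$ and $\{k,l\}$.

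\begin{enumerate}
\item[(i)] \emph{Two shared indices.} Here $\ee[b^2_{\pi_i\pi_j}]\le 16$, and the number of pairs with $a_{ij}\neq 0$ is $O(n\cdot nh_1)$. This gives a contribution $O\big(n^2h_1/(n^4h_1^2)\big)=O(1/(n^2h_1))$, which is the dominant term.
\item[(ii)] \emph{Exactly one shared index,} say $\ee[b_{\pi_i\pi_j}b_{\pi_i\pi_l}]$ with $i,j,l$ distinct. The expectation equals $\frac{1}{n(n-1)(n-2)}\sum_k\sum_{j'\neq k}b_{kj'}\sum_{l'\notin\{k,j'\}}b_{kl'}$. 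The inner sum equals $(\bar b_k-\bar b)-b_{kj'}=O(1)$, so the expectation is $O(n^2/n^3)=O(1/n)$. The number of weighted triples is $\sum_i\big(\sum_j|a_{ij}|\big)^2=O(n(nh_1)^2)$. The contribution is therefore $O\big(n^3h_1^2/(n\cdot n^4h_1^2)\big)=O(1/n^2)$.
\item[(iii)] \emph{Disjoint index pairs.} $\ee[b_{\pi_i\pi_j}b_{\pi_k\pi_l}]$ is an average over four distinct values. Removing the coincidences from $\big(\sum_{k\neq l}b_{kl}\big)^2=0$ leaves sums of $O(n^3)$ bounded terms, again using the row-sum identity. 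This gives $O(n^3/n^4)=O(1/n)$. I expect that exploiting the row-sum identity once more sharpens this to $O(1/n^2)$. Since $\big(\sum_{i\neq j}|a_{ij}|\big)^2=O(n^4h_1^2)$, the contribution is $O(1/n^2)$ with the sharper bound, which is $o(1/(n^2h_1))$.
\end{enumerate}

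The main obstacle is case (iii), because the centering of $b$ is only approximate (the $\frac{1}{n-1}$ normalization is not Hoeffding's $\frac{1}{n-2}$). The crude $O(1/n)$ bound would give only $O(1/n)$ overall, which is too weak. I would therefore expand $\sum_{\text{distinct }k,l,k',l'}b_{kl}b_{k'l'}$ carefully by inclusion–exclusion, writing each partial sum in terms of $\bar b_k-\bar b$ and $\sum_{k\neq l} b_{kl}^2$. I would then check that every surviving term is $O(n^2)$, which gives an expectation of $O(1/n^2)$. If this fails, I would replace $b$ by the exactly Hoeffding-centered matrix and absorb the difference, which is a rank-one-type correction with entries $O(1/n)$, into $T_n$-like terms. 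Combining (i)–(iii) gives $\var(D_n)=O(1/(n^2h_1))$, using $h_1\to0$.
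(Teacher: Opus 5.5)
Your plan is correct and is essentially the paper's argument: you split $\var(D_n)$ by the overlap pattern of $(i,j)$ and $(k,l)$ and use exact moments of the uniform permutation, with the $|\{i,j,k,l\}|=2$ class giving the dominant $O(1/(n^2h_1))$ term. You are in fact more careful than the paper, which asserts $\sum_{j\neq i}b_{ij}=0$ although the row sums are really $\bar b_i-\bar b$, as you note; the resulting corrections to its formula are of lower order.

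The step you left open in (iii) does close as you expected. Inclusion--exclusion from $(\sum_{k\neq l}b_{kl})^2=0$, together with the symmetry of $b$, gives $\sum_{k,l,k',l'\ \text{distinct}}b_{kl}b_{k'l'}=2\sum_{k\neq l}b_{kl}^2-4\sum_k(\bar b_k-\bar b)^2=O(n^2)$. So the disjoint-pair expectation is $O(n^{-2})$, and no Hoeffding re-centering is needed.
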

Using Lemma~\ref{lmm:Tn} and Lemma~\ref{lmm:Dn} we can complete the proof of Theorem \ref{thm:BnCLT}. More specifically, we  have
\begin{align}
    \ee[S_n] = 2\ee[T_n] - C_n = \ee[T_n],
    \label{hd11d}
\end{align}
and
\begin{align*}
    \sqrt{\frac{n}{h_1}}(S_n - \ee[S_n]) = \sqrt{\frac{n}{h_1}}D_n + 2\sqrt{\frac{n}{h_1}}(T_n - \ee[T_n]).
\end{align*}
Since $\var(D_n) = O(1/n^2h_1)$, and $nh_1^2\to\infty$, we have $\sqrt{n/h_1}D_n \rightarrow 0$, which gives us
\begin{align*}
    \sqrt{\frac{n}{h_1}}(S_n - \ee[S_n]) \stackrel{\mathcal{D}}{\longrightarrow}\mathcal{N}(0, 4\sigma_0^2).
\end{align*}
and the assertion follows from \eqref{hd11a} and \eqref{hd11d} together with Lemma~\ref{lmm:Dn} and Lemma~\ref{lmm:Tn}.

\subsection{Proof of main Lemmas} \label{sec43}
\subsubsection{Proof of Lemma~\ref{lmm:Tn}} 

Under independence of $X$ and $Y$ we have
\begin{align*}
    T_n := \frac{1}{n^2 h_1}\sum_{i = 1}^n \bar{b}_{\pi_i} L_i,
\end{align*}
where $L_i := \sum_{j\neq i}a_{ij}$. Note that 
\begin{align*}
    \ee[\bar{b}_{\pi_i}] = \frac{1}{n(n - 1)}\sum_{i = 1}^n\sum_{j\neq i}B_{ij}.
\end{align*}
Thus we have
\begin{align*}
    \lefteqn{\ee[T_n] = \frac{1}{n^2h_1^2}\sum_{i\neq j}\int_{0}^{1}K\Big(\frac{u - i/n}{h_1}\Big)K\Big(\frac{u - j/n}{h_1}\Big)du\times}\\
    &\qquad\qquad\qquad\qquad\frac{1}{n(n-1)}\sum_{i\neq j}\int_{0}^{1}\bar{K}\Big(\frac{v - i/n}{h_2}\Big)\bar{K}\Big(\frac{v - j/n}{h_2}\Big)dv,
\end{align*}
and
\begin{align*}
    \var(T_n)=\frac{\var(\bar{b}_{\pi_i})}{n^4(n - 1)h_1^2}\Big (n \sum_{i=1}^n L_i^2-\Big  (\sum_{i=1}^n L_i\Big )^2\Big ).
\end{align*}
Note that 
\begin{align*}
    \var(\bar{b}_{\pi_i}) = \frac{1}{n(n-1)^2} \sum_{i=1}^n\Big(\sum_{j \neq i} B_{i j}\Big)^2-\Big(\frac{1}{n(n-1)} \sum_{i \neq j} B_{i j}\Big)^2.
\end{align*}
Note that for fixed $i$ we have
\begin{align*}
    \bar{b}_i = \frac{1 - (i/n)^2}{2}+O\left(h_2\right) + O(1/n),
\end{align*}
which gives us 
\begin{align*}
    \var(\bar{b}_{\pi_i}) = \frac{1}{45} + O(h_2) + O(\frac{1}{n}).
\end{align*}
Therefore
\begin{align*}
    \var(T_n)= \Big(\frac{1 + O(h_2) + O(1/n)}{45n^4(n-1)h_1^2}\Big)\Big (n \sum_{i=1}^n L_i^2-\Big  (\sum_{i=1}^n L_i\Big )^2\Big ).
\end{align*}
Also note that 
\begin{align*}
    \sum_{i=1}^n L_i= n^2 h_1 + O(nh_1), \qquad n \sum_{i=1}^n L_i^2-\Big  (\sum_{i=1}^n L_i\Big  )^2= \Theta(n^4 h_1^3),
\end{align*}
which gives us
\begin{align*}
    \var(T_n) = O \Big(\frac{h_1}{n}\Big).
\end{align*}
Let $\bar{L} = \sum_{i = 1}^n L_i / n$. Note that there exists constants $c, c^\prime, C, C^\prime > 0$ such that
\begin{align*}
    \frac{\max_{1\leq i\leq n} (\bar{b}_i - \bar{b})^2}{\sum_{i = 1}^n(\bar{b}_i - \bar{b})^2} \leq \frac{cn^2}{Cn^3}, \qquad \frac{\max_{1\leq i\leq n} (L_i - \bar{L})^2}{\sum_{i = 1}^n(L_i - \bar{L})^2} \leq \frac{c^\prime(nh_1)^2}{C^\prime(nh_1)^3} = O\Big  (\frac{1}{nh_1}\Big  ).
\end{align*}
Therefore
\begin{align*}
    n\frac{\max_{1\leq i\leq n} (w_i - \bar{w})^2}{\sum_{i = 1}^n(w_i - \bar{w})^2} \frac{\max_{1\leq i\leq n} (L_i - \bar{L})^2}{\sum_{i = 1}^n(L_i - \bar{L})^2} = O\Big  (\frac{1}{nh_1}\Big  ),
\end{align*}
which together with Theorem 4 in \citet{hoeffding1951combinatorial} gives us 
\begin{align*}
    \frac{T_n - \ee[T_n]}{\sqrt{\var(T_n)}}\stackrel{\mathcal{D}}{\longrightarrow} \mathcal{N}(0, 1).
\end{align*}
To finish the proof we need to find $\lim_{n\rightarrow\infty}n\var(T_n)/h_1$. Note that
\begin{align*}
     \lim_{n\rightarrow\infty}\frac{n}{h_1}\var(T_n) = \lim_{n\rightarrow\infty}\frac{\sum_{i=1}^n (L_i - \bar{L})^2}{45n^3h_1^3}.
\end{align*}
For fixed $i$, by change of variables we have 
\begin{align*}
    a_{ij} = \int_{v: i/n + h_1v\in[0, 1]}K(v)K\Big  (v+\frac{i - j}{nh_1}\Big  )dv, 
\end{align*}
which gives 
\begin{align*}
    L_i = \sum_{j\neq i}a_{ij} = \int_{v: i/n + h_1v\in[0, 1]}K(v)\Big  \{\sum_{j\neq i}K\Big (v+\frac{i - j}{nh_1}\Big )\Big  \}dv.
\end{align*}
Note that for fixed $i$, $\{v: i/n + h_1v\in[0, 1]\} = [-\frac{i}{nh_1}, \frac{1-i/n}{h_1}]$. Also 
\begin{align*}
    \sup_{i\leq n}\sup_{\abs{v}\leq 1}\Big  |  {\sum_{j\neq i}K\Big   (v+\frac{i - j}{nh_1}\Big ) - n\int_0^1 K\Big  (v + \frac{i/n - x}{h_1}\Big  )dx}  \Big  | = o(nh_1).
\end{align*}
Since $K$ is supported on $[-1, 1]$, we have
\begin{align*}
    \int_0^1 K\Big  (v + \frac{i/n - x}{h_1}\Big  )dx = h_1\int_{(i/n - 1)/h_1}^{i/nh_1} K(v + w)dw = h_1\int_{\max\{(i/n - 1)/h_1, -1 - v\}}^{\min\{i/nh_1, 1 - v\}} K(v + w)dw.
\end{align*}
For $i \leq nh_1$ we have 
\begin{align*}
    h_1 \int_{\left(i/n-1\right) / h_1}^{i / nh_1} K(v + w) dw \rightarrow h_1 \int_{-\infty}^{i/nh_1} K(v + w) dw = h_1 \int_{-1}^{v + i/nh_1} K(z) dz = h_1 \bar{K}\Big  (v + \frac{i}{nh_1}\Big ),
\end{align*}
which  gives us 
\begin{align*}
    L_i=\int_{(i/n - 1)/h_1}^{i/nh_1} K(v) \sum_{j\neq i}K(v+\frac{i - j}{nh_1}) d v = \int_{-i/nh_1}^1 K(v)\Big  (n h_1 \bar{K}(v+\frac{i}{nh_1})+o\left(n h_1\right)\Big  ) d v.
\end{align*}
Therefore for $i = nh_1t$ with $t\in[0, 1]$ we have 
\begin{align*}
    \lim_{n\rightarrow\infty}\frac{L_i}{nh_1} = \int_{-t}^1 K(v)\bar{K}(v + t)dv.
\end{align*}
For $i\in[nh_1, n(1 - h_1)]$ we have $L_i/nh_1\to 1$. Additionally by symmetry the same is true for $i/n\geq 1 - h_1$. Taking the  average over all $L_i$ yields 
\begin{align*}
    \bar{L} = nh_1 + O(nh_1^2) + o(nh_1).
\end{align*}
Let $I_L := \{i: i\leq nh_1\}$ and $I_R := \{i: i\geq n(1 - h_1)\}$ and $I_0 := \{1, \ldots, n\} \setminus (I_R\cup I_L)$. For interior $i\in I_0$ we have $L_i - \bar{L} = O(nh_1^2) + o(nh_1)$. Therefore 
\begin{align*}
    \sum_{i \in I_0}\left(L_i-\bar{L}\right)^2 = O\left(n^3 h_1^4\right).
\end{align*}
For $i\in I_L$, we have $L_i = nh_1\int_{-i/nh_1}^1 K(v)\bar{K}(v + \frac{i}{nh_1})dv + o(nh_1)$. Therefore 
\begin{align*}
    L_i - \bar{L} = nh_1\Big  (\int_{-i/nh_1}^1 K(v)\bar{K}\Big (v + \frac{i}{nh_1}\Big )dv - 1\Big  ) + o(nh_1).
\end{align*}
Hence 
\begin{align*}
    \sum_{i \in I_L}\left(L_i-\bar{L}\right)^2 = n^2 h_1^2 \sum_{i \in I_L}\Big  (1-\int_{-i/nh_1}^1 K(v)\bar{K}\Big (v + \frac{i}{nh_1}\Big )dv\Big  )^2 + o\left(n^3 h_1^3\right).
\end{align*}
Now $\abs{I_L}\leq nh_1$, therefore 
\begin{align*}
    \sum_{i \in I_L}\left(L_i-\bar{L}\right)^2 = n^3 h_1^3 \int_0^1\Big  (1-\int_{-t}^1 K(v)\bar{K}(v + t)dv\Big  )^2 dt + o(n^3 h_1^3).
\end{align*}
By symmetry ot follws 
\begin{align*}
    \sum_{i \in I_R}\left(L_i-\bar{L}\right)^2 = n^3 h_1^3 \int_0^1\Big  (1-\int_{-t}^1 K(v)\bar{K}(v + t)dv\Big )^2 dt + o(n^3 h_1^3),
\end{align*}
and combining these results gives  
\begin{align*}
    \sum_{i = 1}^n(L_i - \bar{L})^2 = 2\sum_{i = 1}^{ \lfloor n h_1\rfloor }(nh_1)^2 \bar{K}^2\Big (-\frac{i}{nh_1}\Big  ) \sim 2n^3h_1^3\int\bar{K}^2(-t)dt + o(n^3 h_1^3).
\end{align*}
Therefore 
\begin{align*}
    \lim_{n\rightarrow}\frac{\sum_{i=1}^n\left(L_i-\bar{L}\right)^2}{45 n^3 h_1^3} = \frac{2}{45} \int_0^1\Big  (1-\int_{-t}^1 K(v)\bar{K}(v + t)dv\Big  )^2 dt.
\end{align*}
Hence we have
\begin{align*}
    \sqrt{\frac{n}{h_1}}(T_n - \ee[T_n])\stackrel{\mathcal{D}}{\longrightarrow} \mathcal{N}(0, \sigma_0^2).
\end{align*}

\subsubsection{Proof of Lemma~\ref{lmm:Dn}}

    Note that $\ee[D_n] = 0$, since $\sum_{j\neq i}b_{ij} = \sum_{i\neq j}b_{ij} = 0$, and hence $\ee[b_{\pi_i\pi_j}] = 0$. For the variance, let 
    \begin{align*}
        S_1:= \sum_{i\neq j}a^2_{ij}, \qquad L_i = \sum_{j\neq i}a_{ij}, \qquad S_2:= \sum_{i = 1}^nL_i^2, \qquad S_3 := \sum_{i = 1}^n L_i, 
    \end{align*}
and define 
\begin{align*}
    \mu_2 := \ee[b_{\pi_i\pi_j}^2]  =\frac{1}{n(n-1)}\left[\sum_{i \neq j} B_{i j}^2-2 n \sum_{i=1}^n \bar{b}_i^2+n(n+1) \bar{b}^2\right]. 
\end{align*}
Note that $\abs{b_{ij}}\leq \abs{B_{ij}} + \abs{\bar{b}_i} + \abs{\bar{b}_j} + \abs{\bar{b}} \leq 4$, therefore $0\leq \mu_2 \leq 16$. Additionally note that $B_{ij} = 1 - \max\{i,j\}/n + O(h_2)$, therefore $\mu_2 = \Theta(1)$. 

Then we will show below that 
        \begin{align}
        \label{hd12}
            \var(D_n) = \frac{\mu_2}{n^2h_1}\left\{2S_1 - \frac{4}{n-2}(S_2 - S_1) + \frac{2}{(n - 2)(n - 3)}(S_3^2 + 2S_1 - 4S_2)\right\}.
        \end{align}

    First note that for each $i$ the total number of pairs $(i, j)$ such that $a_{ij} \neq 0$ is $O(nh_1)$. Also, there exists a constant $C_K$ that only depends on the kernel $K$ such that $\abs{a_{ij}} \leq C_K$. Therefore we have 
    $S_1 = O(n^2h_1)$. By proof of Lemma~\ref{lmm:Tn} $L_i = O(nh_1)$, therefore $S_2 = O(n^3h_1^2)$. Finally $T = O(n^2h_1)$. Plugging in these results into \eqref{hd12} gives 
    \begin{align*}
        \var(D_n) = \frac{\Theta(1)}{n^4h_1^2} O(n^2h_1) = O\Big (\frac{1}{n^2h_1} \Big ).
    \end{align*}
which proves the assertion.  We conclude with the proof of the remaining statement \eqref{hd12}. 
\medskip

\noindent 
{\it Proof of \eqref{hd12}}
    Let $S = n^2h_1D_n = \sum_{i\neq j}a_{ij}b_{\pi_i\pi_j}$. Since $\ee[S] = 0$ we have
    \begin{align*}
        \var(S) = \sum_{i\neq j}\sum_{k\neq \ell}a_{ij}a_{k\ell}\ee[b_{\pi_i\pi_j}b_{\pi_k\pi_\ell}].
    \end{align*}
    Note that $\ee[b_{\pi_i\pi_j}b_{\pi_k\pi_\ell}]$ depends only on the overlap pattern of $(i, j)$ and $(k, \ell)$. 

    \begin{enumerate}
        \item \textbf{Identical or reversed pairs} $\abs{\{i, j, k, \ell\}} = 2$.
        In this case we have
        \begin{align*}
            \ee[b_{\pi_i\pi_j}b_{\pi_k\pi_\ell}] = \ee[b_{\pi_i\pi_j}^2] = \frac{1}{n(n - 1)}\sum_{i\neq j}b_{ij}^2 = \mu_2.
        \end{align*}
        Therefore 
        \begin{align*}
           \sum_{\substack{i \neq j,\, k \neq \ell \\ |\{i, j, k, \ell\}| = 2}} a_{ij}a_{k\ell}\ee(b_{\pi_i\pi_j} b_{\pi_k\pi_\ell}) = 2\mu_2\sum_{i\neq j}a_{ij}^2 = 2\mu_2S_1.
        \end{align*}

        \item \textbf{One shared index} $\abs{\{i, j, k, \ell\}} = 3$.
        In this scenario we have
        \begin{align*}
            \ee[b_{\pi_i\pi_j}b_{\pi_i\pi_\ell}] = \frac{1}{n(n - 1)(n - 2)}\sum_{r\neq s\neq t}b_{rs}b_{rt}.
        \end{align*}
        Additionally we have $\sum_{s\neq r}b_{rs} = 0$, 
        \begin{align*}
            \sum_{s\neq r}b_{rs}b_{rt} = \Big (\sum_{s\neq r}b_{rs}\Big )^2 - \sum_{s\neq r}b_{rs}^2 = - \sum_{s\neq r}b_{rs}^2. 
        \end{align*}
        Hence
        \begin{align*}
            \ee[b_{\pi_i\pi_j}b_{\pi_i\pi_\ell}] = -\frac{\mu_2}{n - 2}.
        \end{align*}
        Therefore 
        \begin{align*}
           \sum_{\substack{i \neq j,\, k \neq \ell \\ |\{i, j, k, \ell\}| = 3}} a_{ij}a_{k\ell}\ee(b_{\pi_i\pi_j} b_{\pi_k\pi_\ell}) = -\frac{\mu_2}{n - 2}\sum_{\substack{i \neq j,\, k \neq \ell \\ |\{i, j, k, \ell\}| = 3}} a_{ij}a_{k\ell} = \frac{4\mu_2}{n - 2}(S_2 - S_1).
        \end{align*}
        \item \textbf{All distinct} $\abs{\{i, j, k, \ell\}} = 4$.
        In this case, we have
        \begin{align*}
            \ee[b_{\pi_i\pi_j}b_{\pi_k\pi_\ell}] = \frac{1}{n(n - 1)(n - 2)(n - 3)}\sum_{\substack{i \neq j,\, k \neq \ell \\ |\{i, j, k, \ell\}| = 4}} b_{ij}b_{k\ell}
        \end{align*}
        Using $\sum_{i\neq j}b_{ij} = 0$, therefore
        \begin{align*}
            \sum_{\substack{i \neq j,\, k \neq \ell \\ |\{i, j, k, \ell\}| = 4}} b_{ij}b_{k\ell} = 2\sum_{i\neq j}b_{ij}^2,
        \end{align*}
        therefore
        \begin{align*}
            \ee[b_{\pi_i\pi_j}b_{\pi_k\pi_\ell}] = \frac{2\mu_2}{(n - 2)(n - 3)}.
        \end{align*}
        This gives us
        \begin{align*}
            \sum_{\substack{i \neq j,\, k \neq \ell \\ |\{i, j, k, \ell\}| = 4}} a_{ij}a_{k\ell}\ee(b_{\pi_i\pi_j} b_{\pi_k\pi_\ell}) = \frac{2\mu_2}{(n - 2)(n - 3)}\sum_{\substack{i \neq j,\, k \neq \ell \\ |\{i, j, k, \ell\}| = 4}} a_{ij}a_{k\ell} = \frac{2\mu_2(S_3^2 + 2S_1 - 4S_2)}{(n - 2)(n - 3)}.
        \end{align*}
    \end{enumerate}
    Putting these together gives us the desired result.

\subsection{Corrected variance}\label{eq:CorrectedVariance}
In this section we provide the correct formula for the asymptotic variance in the central limit theorem \eqref{hd3}. It was shown in \citet{shidrtonhan2021a} that a revised version of Equations (24)-(26) in \citet{dette2013copula} is
\begin{align*}
    n^{1 / 2}\left(\hat{r}_n - \xi\right) = \frac{12}{n^{1 / 2}} \sum_{i=1}^n\left(Z_i- \ee [Z_i]\right) + o_p(1)
\end{align*}
where $Z_i = Z_{i1} - Z_{i2} - Z_{i3}$ with

\begin{align*}
& Z_{i1} = \int_0^1 \bone\{F_{2}(Y_i) \leq v\} \tau(F_{1}(X_i), v) d v, \\
& Z_{i2} = \int_{[0, 1]^2} \bone\{F_{1}(X_i) \leq u\} \tau(u, v) \partial_1\tau(u, v) d u d v, \\
& Z_{i3} = \int_{[0, 1]^2} \bone\{F_{2}(Y_i) \leq v\} \tau(u, v) \partial_2 \tau(u, v) d u d v,
\end{align*}

Then note that 
\begin{align}
\nonumber
    \var(Z_{i1}) &= \int_{[0,1]^3} \tau(s, v \wedge w) \tau(s, v) \tau(s, w) d s d v d w-\big (\int_{[0,1]^2} \tau^2(u, v) d u d v\Big )^2, 
    %\label{eq:var1}
    \\
    \nonumber
    \var(Z_{i2}) &= \frac{1}{4}\Big [\int_{[0, 1]^3} \tau^2(u, v)\tau^2(u, w) d v dw d u -\Big(\int_{[0, 1]^2} \tau(u, v)^2 d u d v\Big)^2\Big],%\label{eq:var2}
    \\
    \nonumber 
    \var(Z_{i3}) &= \frac{1}{4}\Big[\int_{[0, 1]^3} \tau^2(u, v)\tau^2(w, v) d u d v dw -\Big(\int_{[0, 1]^2} \tau(u, v)^2 d u d v\Big)^2\Big],
    %\label{eq:var3}
    \\
    \nonumber
    2\cov(Z_{i1}, Z_{i2}) &= -\int_{[0, 1]^3}\tau^2(u, v)\tau^2(u, w)du dv dw + \Big(\int_{[0, 1]^2} \tau(u, v)^2 d u d v\Big)^2,
    %\label{eq:covar12}
    \\
    2\cov(Z_{i1}, Z_{i3}) &= -\int_{[0,1]^4} \tau^2(u, y_1) \tau(x_1, w) \partial_2 \tau(x_1, y_1) \bone\{y_1 \leq w\} d x_1 d y_1 d u d w +\nonumber\\
    &\qquad \Big(\int_{[0, 1]^2} \tau(u, v)^2 d u d v\Big)^2,\label{eq:covar13}\\
    2\cov(Z_{i2}, Z_{i3}) &= \frac{1}{2}\Big[\int_{[0, 1]^4}\tau^2(u, v^\prime)\tau^2(u^\prime, v)\partial_2\tau(u, v)du^\prime dv^\prime du dv - \Big(\int_{[0, 1]^2} \tau(u, v)^2 d u d v\Big )^2\Big ]. \label{eq:covar23}
\end{align}
Putting these together we have 
\begin{align*}
    \var(Z_{i1} - Z_{i2} - Z_{i3}) &= \int_{[0,1]^3} \tau(s, v \wedge w) \tau(s, v) \tau(s, w) d s d v d w \\
    & +\frac{5}{4} \int_{[0,1]^3} \tau^2(u, v) \tau^2(u, w) d u d v d w \\
    & +\frac{1}{4} \int_{[0,1]^3} \tau^2(u, v) \tau^2(w, v) d u d v d w \\
    & +\int_{[0,1]^4} \tau^2(u, y_1) \tau(x_1, w) \partial_2 \tau(x_1, y_1) \bone\{y_1 \leq w\} d x_1 d y_1 d u d w \\
    & +\frac{1}{2} \int_{[0,1]^4} \tau^2(u, v^{\prime}) \tau^2(u^{\prime}, v) \partial_2 \tau(u, v) d u^{\prime} d v^{\prime} d u d v \\
    & -4\Big(\int_{[0,1]^2} \tau^2(u, v) d u d v\Big)^2 .
\end{align*}
Note that the variance $\sigma^2 = \var(Z_{i1} - Z_{i2} - Z_{i3})$ derived above differs from the corresponding expression in~\citet{dette2013copula}. This discrepancy arises because the quantities corresponding to \eqref{eq:covar13} and \eqref{eq:covar23} were not computed correctly in~\citet{dette2013copula}. In addition, a sign error in that paper, detected by~\citet{shidrtonhan2021a}, led to the use of $\var(Z_{i1} + Z_{i2} + Z_{i3})$ in place of the correct $\var(Z_{i1} - Z_{i2} - Z_{i3})$.
Moreover, a simple calculation shows that in the case of independence, that is $C(u,v)=uv,$  $\tau (u,v)=v$, $\sigma^2=0$.

\bigskip

{\bf Acknowledgments. } 
This work  has been supported
by the Deutsche Forschungsgemeinschaft (DFG)  TRR
391 {\it Spatio-temporal Statistics for the Transition of Energy and Transport}, project number
520388526 (DFG).

\printbibliography
\end{document}